\numberwithin{equation}{section}
\theoremstyle{definition}
\newtheorem{Definition}{Definition}[section]
\newtheorem{Example}[Definition]{Example}
\newtheorem{Remark}[Definition]{Remark}
\theoremstyle{plain}
\newtheorem{Theorem}[Definition]{Theorem}
\newtheorem{Lemma}[Definition]{Lemma}
\newcommand{\al}{\alpha}
\newcommand{\ep}{\varepsilon}
\newcommand{\la}{\lambda}
\newcommand{\La}{\Lambda}
\newcommand{\Z}{\mathbb{Z}}
\newcommand{\Be}{\boldsymbol{e}}
\newcommand{\op}{\operatorname}
\DeclareMathOperator{\Hom}{Hom}
\renewcommand{\tilde}{\widetilde}
\newcommand{\YD}{\mathrm{YD}}
\newcommand{\SSYT}{\mathrm{SSYT}}
\newcommand{\GTP}{\mathrm{GTP}} 
\newcommand{\wt}{\mathrm{wt}}
\newcommand{\tf}{\tilde{f}}
\newcommand{\te}{\tilde{e}}
\title{Gelfand-Tsetlin Crystals}
\author{Jonas T. Hartwig \and O'Neill Kingston}
\date{\today}
\address{Department of Mathematics, Iowa State University, Ames, IA-50011, USA}
\email{jth@iastate.edu}
\urladdr{http://jthartwig.net}
\address{Department of Mathematics, Iowa State University, Ames, IA-50011, USA}
\email{oneillk@iastate.edu}
\urladdr{https://sites.google.com/iastate.edu/oneill-kingston/home}
\begin{document}

\maketitle

\begin{abstract}
We give a crystal structure on the set of Gelfand-Tsetlin patterns which parametrize bases for finite-dimensional irreducible representations of the general linear Lie algebra. The crystal data are given in closed form, expressed using tropical polynomial functions of the entries of the patterns. We prove that with this crystal structure, the natural bijection between Gelfand-Tsetlin patterns and semistandard Young tableaux is a crystal isomorphism.
\end{abstract}


\ytableausetup{centertableaux}  

\section{Introduction}

The introduction of crystal bases in the 1990's by Kashiwara (\cite{Kashiwara1990}, \cite{Kashiwara1991}, \cite{KN1994}) and Lusztig \cite{lusztig} was a breakthrough in the representation theory of Lie algebras and quantum groups, structures that have become ubiquitous in modern physics, algebra and geometry.  
A crystal basis can be combinatorially identified with certain directed graphs whose edges are labeled by simple root vectors.
 At the same time, the vertex set of the graph is a basis for a highest-weight representation of a quantum group (as described, for instance, by Hong and Kang in \cite{hongkang}), and its combinatorial structure is naturally compatible with taking tensor products, describing branching rules, and much more.  Crystal bases for all classical Lie algebras can be realized in terms of Kashiwara-Nakashima tableaux. The semistandard Young tableaux that they generalize have been used extensively in representation theory over the last century.

Another long-running undercurrent to this area of study comes from a more purely combinatorial perspective.  When the utility of applying Young tableaux to problems in representation theory became clear, many more generalizations were made than those discussed above.  The era of computers accelerated the growth of interest in this area, and today there are robust communities of mathematicians whose work is focused on coding efficient representations (in the colloquial sense) of these structures, often in Python and Sage, so that these may then be used to attack problems in algebra, combinatorics, geometry and beyond.  With a high level of research output surrounding tableaux and tableau-like structures, an active sub-discipline is the effort to identify when two seemingly-different types of structure are in fact equivalent.  As described by Sheats in \cite{sheats}, these enumeration problems can frequently be difficult, but they can also illuminate surprising connections between areas of mathematics that appeared to have little in common. 

It is well-known that semistandard Young tableaux (SSYT) are in bijection with Gelfand-Tsetlin patterns (GTPs) (\cite{geltse1950}), arrays of numbers that were introduced using branching rules for the general linear Lie algebra and which have found many subsequent applications. 
  We therefore have two different combinatorial descriptions of the same algebraic objects, one in terms of tableaux and the other in terms of patterns.

Any set in bijection with the vertices of a crystal graph itself acquires the structure of a crystal graph by requiring that the bijection is a crystal isomorphism. This trivial fact makes it obvious that the set of Gelfand-Tsetlin patterns has the structure of a crystal.
However to the best of our knowledge this crystal structure has not been made explicit. That is the goal of this paper.

A related result from \cite{WatYam2019} expresses the string length functions in terms of the entries of the Gelfand-Tsetlin patterns, and this data is enough to theoretically determine the other crystal data, see Remark \ref{rem:watyam}.

Our formulas (see Definition \ref{def:GT-crystal-data}) for the crystal data (raising and lowering operators, weight and string length functions) defined on Gelfand-Tsetlin patterns have several advantages over the description in terms of semistandard Young tableaux.
First of all, they are given by simple arithmetic expressions involving taking the maximum over a set of integers computed directly from the entries of the pattern at hand.
There is no need to apply a row- or column-reading function, and subsequently apply a cumbersome signature rule to the result, before the actual operation can be performed.

Secondly, particularly interesting is that the expressions are in fact given by tropical (Laurent) polynomials in the pattern entries. It is already well-known that crystal basis theory has deep connections to tropical mathematics, see e.g. \cite{bumpschilling}. It would be interesting to better understand the appearance of tropical polynomials in these expressions.

And lastly, it may be conjectured that similar tropical expressions can be written down in other types, in particular for representations of the symplectic and orthogonal Lie algebras, for which analogs of Gelfand-Tsetlin bases exist \cite{geltse1950b}\cite{zelobenko1973}\cite{molev}. Moreover, we expect that analogs of these formulas can be used to construct crystal bases for representations of some associative algebras analogous to enveloping algebras of Lie algebras, such as certain Galois orders \cite{FutOvs2010}.

This paper is organized as follows.
In Section \ref{sec:preliminaries} we fix some notation and terminology and recall some well-known results regarding semistandard Young tableaux and Gelfand-Tsetlin patterns.
In Section \ref{sec:crystal-structure} we state and prove the first theorem, in which we give explicit formulae for the crystal operators on $\GTP(n, \lambda)$ and prove that these equip the set with a crystal basis structure.  
In Section \ref{sec:crystal-isomorphism} we prove the second theorem, stating that the natural bijection between the sets $\SSYT(n,\lambda)$ and $\GTP(n, \lambda)$ is an isomorphism of crystals.  The proof relies on the fact that, by the nature of the bijection, we may obtain various useful combinatorial data about a given semistandard tableaux by examining certain sums and differences of the corresponding pattern entries, see Lemma \ref{lem:main2}. 
In Section \ref{sec:example} we give an example to illustrate how to apply the formulas.

\section*{Acknowledgements}
The first author gratefully acknowledges support from Simons Collaboration Grant for Mathematicians, award number 637600. This work was inspired by a question of G. Benkart.

\section{Preliminaries}
\label{sec:preliminaries}

In this section we recall some well-known definitions and results from the literature that we will use.

\subsection{Crystals}
We follow \cite{hongkang}. Let $X=\left(A,\, \Pi=\left\{\al_i\right\}_{i\in I},\, \Pi^\vee=\left\{\al_i^\vee\right\}_{i\in I},\, P,\, P^\vee\right)$ be a Cartan datum with finite index set $I$.
\begin{Remark}
The only Cartan datum we will use in this paper is $A_{n-1}$, where $I=\{1,2,\ldots,n-1\}$, the weight lattice $P$ is the abelian group generated by $\{\Be_i\}_{i=1}^n$ subject to $\Be_1+\Be_2+\cdots+\Be_n=0$, $\al_i=\Be_i-\Be_{i+1}$, and $\al_j^\vee\in\Hom_\Z(P,\Z)$ are given by $\langle \Be_i,\al_j^\vee\rangle = \delta_{ij}-\delta_{i,j+1}$.
\end{Remark}

\begin{Definition}
A \emph{crystal} of type $X$ is a non-empty set $\mathcal{B}$ together with maps 
\begin{align*}
\wt: \mathcal{B} &\rightarrow P, \\ 
\te_i, \tf_i: \mathcal{B} &\rightarrow \mathcal{B} \sqcup \{ 0 \}, \quad i\in I,\\
\varepsilon_i, \varphi_i: \mathcal{B} &\rightarrow \mathbb{Z} \sqcup \{ -\infty \},\quad i\in I, 
\end{align*}
satisfying for all $b,b'\in\mathcal{B}$ and $i\in I$:
\begin{enumerate}[{\rm (i)}]
\item $\tf_i(b)=b'$ if and only if $b=\te_i(b')$, in which case
\[ \wt(b') = \wt(b) - \alpha_i, \qquad
  \ep_i(b') = \ep_i(b) + 1, \qquad 
  \varphi_i(b') = \varphi_i(b) - 1\]
  and we write
        \[b\overset{i}{\longrightarrow} b'.\]
\item $\varphi_i(b) = \ep_i(b) + \langle \wt(b), \alpha_i^\vee \rangle$. In particular, $\varphi_i(b) = -\infty$ if and only if $\ep_i(b) = -\infty$.
\item If $\varphi_i(b) = \ep_i(b)=-\infty$, then $\te_i(b) = \tf_i(b) = 0$.
\end{enumerate}
The cardinality of $\mathcal{B}$ is the \emph{degree} of the crystal, $\op{wt}$ is called the \emph{weight map}, $\te_i$ and $\tf_i$ are called \emph{crystal operators}, and $\varphi_i$ and $\varepsilon_i$ are called \emph{string length functions}.
\end{Definition}

\begin{Definition}
Let $\mathcal{B}_1$ and $\mathcal{B}_2$ be crystals of type $X$.
A \emph{morphism} $\Psi: \mathcal{B}_1 \rightarrow \mathcal{B}_2$
is a map $\Psi:\mathcal{B}_1\sqcup\{0\}\to \mathcal{B}_2\sqcup\{0\}$ such that $\Psi(0)=0$ and for all $b,b'\in\Psi^{-1}(\mathcal{B}_2)$ and $i\in I$:
\begin{enumerate}[{\rm (i)}]
\item $\op{wt}(\Psi(b))=\op{wt}(b),\quad \ep_i(\Psi(b))=\ep_i(b),\quad \varphi_i(\Psi(b))=\varphi_i(b)$.
\item If $b\overset{i}{\longrightarrow} b'$ then $\Psi(b)\overset{i}{\longrightarrow}\Psi(b')$.
\end{enumerate}
If moreover $\Psi$ is bijective as a function $\mathcal{B}_1\sqcup\{0\}\to\mathcal{B}_2\sqcup\{0\}$, then $\Psi$ is an \emph{isomorphism}.
\end{Definition}

\subsection{Partitions, Young diagrams and semistandard Young tableaux}
\paragraph{1} For a fixed integer $N \geq 0$, a \emph{partition} of $N$ is a sequence $\lambda = (\lambda_1, \lambda_2, \dots)$ of integers $\lambda_i$ such that $\lambda_1 \geq \lambda_2 \geq \dots \geq 0$ and $|\lambda|:=\Sigma_{i\geq 1} \lambda_i = N$. The \emph{length} of a partition $\lambda$, $\ell(\lambda)$, is equal to the highest index $i$ for which $\lambda_i > 0$. For $1 \leq i \leq \ell(\lambda)$, the $\lambda_i$ are called the \emph{parts} of $\lambda$.  Let $\mathcal{P}(N)$ be the set of partitions of $N$ and put $\mathcal{P} = \bigcup_{N=0}^\infty \mathcal{P}(N)$. \\

\paragraph{2} If $\lambda$ is a partition of $N$, the \emph{Young diagram} $\YD(\lambda)$ is a left-justified collection of boxes where the $i$th row has $\lambda_i$ boxes.  The \emph{shape} of a Young diagram is its partition $\lambda$. A \emph{tableau} is a Young diagram whose boxes are filled with elements from an alphabet.
\[
\YD((3,2,2,1)) = \text{ }
    \ydiagram{3,2,2,1}
\] 

\paragraph{3} A \emph{subdiagram} is a Young diagram $\YD(\lambda')$ that is contained in Young diagram $\YD(\lambda)$.  A \emph{skew diagram} $\YD(\lambda/\lambda')$ is the diagram obtained by subtracting a subdiagram $\lambda'$ from $\lambda$.
\[
\YD((4,2,2,1)/(2,2)) = \text{ }
\begin{ytableau}
    \none & \none & & \\
        \none & \none \\
        & \\
        \\
    \end{ytableau}
\] 

\paragraph{4} A \emph{semistandard Young tableau} (SSYT) of shape $\lambda$ and rank $n$ is a tableau of shape $\lambda$ where the boxes are filled with entries from the alphabet is $[n]=\{1,2,\dots, n\}$ so that each row is weakly increasing from left to right and each column is strictly increasing from top to bottom.
\[
T = \text{ }
    \begin{ytableau}
1 & 1 & 3 \\
        2 & 3 \\
        3 & 4 \\
        4
\end{ytableau}
\]
Let $\SSYT(n,\lambda)$ denote the set of SSYTs of rank $n$ and shape $\lambda$.

\subsection{Crystal structure on $\SSYT(n,\lambda)$}
We recall the crystal structure on semistandard Young tableaux. For details, see e.g. \cite{hongkang},\cite{bumpschilling}.

Let $n$ be a positive integer and $\lambda$ be a partition of length at most $n$. Let $T\in\SSYT(n,\lambda)$. The \emph{far-eastern reading} of $T$, denoted $\mathrm{FarEast}(T)$ is the $|\la|$-tuple of letters read off from $T$, reading columns from right to left and each column top to bottom.
The map $\mathrm{FarEast}:\SSYT(n,\la)\to\{1,2,\ldots,n\}^{|\la|}$ is injective and we denote the inverse map by $\mathrm{FarEast}^{-1}$, defined on the image of $\mathrm{FarEast}$.

For $i\in\{1,2,\ldots,n-1\}$, the \emph{$i$-bracketing} of a tuple of letters $x=(x_1,x_2,\ldots,x_{|\la|})$, denoted in this paper by $[x]_i$ is obtained by crossing out the right-most $i$ having at least one $i+1$ to the right of it, in which case we also cross out the leftmost of those $i+1$'s, and repeating this recursively (ignoring crossed out entries) until $(i,i+1)$ is not a subsequence.
The crossed out $i$'s and $(i+1)$'s in $x$ are said to be ($i$-)\emph{bracketed}.
Any remaining $i$'s or $(i+1)$'s in $x$ are ($i$-)\emph{unbracketed}.

\begin{Definition}[Crystal structure on $\SSYT(n,\la)$, \cite{hongkang},\cite{bumpschilling}] Let $n$ be a positive integer and $\la$ a partition with $n$ or fewer parts. Let $P$ be the weight lattice of type $A_{n-1}$.
Define for $i\in\{1,2,\ldots,n-1\}$ and $T\in\SSYT(n,\la)$:
\begin{align*}
\wt(T)&=N_1(T)\boldsymbol{e}_1+N_2(T)\boldsymbol{e}_2+\cdots+N_n(T)\boldsymbol{e}_n,\text{ where $N_i(T)=\#$boxes in $T$ containing $i$,} \\
\varphi_i(T)&=\text{number of $i$-unbracketed  $i$'s in $[\mathrm{FarEast}(T)]_i$,}\\
\varepsilon_i(T)&=\text{number of $i$-unbracketed $(i+1)$'s in $[\mathrm{FarEast}(T)]_i$,}\\
\widetilde{f}_i(T) &=
\begin{cases}
\mathrm{FarEast}^{-1}\big(\text{change leftmost $i$ in $[\mathrm{FarEast}(T)]_i$ to $i+1$}\big), & 
\text{if $\varphi_i(T)>0$,}
\\
0,& \text{otherwise,}
\end{cases}\\
\widetilde{e}_i(T) &=
\begin{cases}
\mathrm{FarEast}^{-1}\big(\text{change rightmost  $i+1$ in $[\mathrm{FarEast}(T)]_i$ to $i$}\big), & 
\text{if $\ep_i(T)>0$,}\\
0,& \text{otherwise.}
\end{cases}
\end{align*}
\end{Definition}

\begin{Theorem}[See e.g. \cite{hongkang}]
Let $n$ be a positive integer and $\lambda$ be a partition with $n$ or fewer parts. The set $\SSYT(n,\la)$ equipped with the above maps $\wt,\varphi_i,\varepsilon_i,\widetilde{f}_i,\widetilde{e}_i$ constitutes a crystal of type $A_{n-1}$.
\end{Theorem}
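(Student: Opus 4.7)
The plan is to verify the three crystal axioms (i)--(iii) of the definition directly from the formulas. Axiom (iii) is vacuous here: since $\SSYT(n,\la)$ is finite and each tableau contains a bounded number of $i$'s and $(i+1)$'s, the string length functions $\varphi_i(T)$ and $\varepsilon_i(T)$ always take values in $\Z_{\geq 0}$ and never attain $-\infty$.

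The essential technical step is to establish \emph{well-definedness} of $\tf_i$ and $\te_i$ as maps into $\SSYT(n,\la)\sqcup\{0\}$. That is, when $\varphi_i(T)>0$, the tuple obtained from $\mathrm{FarEast}(T)$ by changing one letter from $i$ to $i+1$ must be the far-east reading of a semistandard tableau of the same shape; likewise for $\te_i$. The approach is to locate the cell $(r,c)$ of $T$ holding the modified entry and check the neighboring cells. One argues by contradiction: if the cell immediately to the right of $(r,c)$, or the cell immediately below it, contained $i+1$, then that $(i+1)$ would pair with the $i$ at $(r,c)$ (or with an $i$ sitting between them in tuple order) during the bracketing, contradicting the hypothesis that the $i$ at $(r,c)$ is unbracketed (and, in the case of $\tf_i$, the leftmost such). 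Hence replacing $i$ by $i+1$ preserves the row-weak and column-strict conditions, and the shape is preserved.

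Once well-definedness is in hand, axiom (i) follows by counting. The weight shift $\wt(\tf_i(T)) = \wt(T) - \al_i$ is immediate, as exactly one $i$ has been replaced by one $i+1$. The standard observation that after bracketing the unbracketed letters form a block of $(i+1)$'s followed by a block of $i$'s (in tuple order) shows that the $\tf_i$ operation moves the leftmost element of the right-hand block into the left-hand block, leaving every bracketed pair intact; hence $\varphi_i(\tf_i(T))=\varphi_i(T)-1$ and $\varepsilon_i(\tf_i(T))=\varepsilon_i(T)+1$. The symmetry of this block description shows that $\te_i$, defined via the rightmost unbracketed $(i+1)$, is the inverse operation, giving $\tf_i(T)=T' \Longleftrightarrow T=\te_i(T')$.

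Axiom (ii) reduces to the identity
\[
\varphi_i(T) - \varepsilon_i(T) \;=\; N_i(T) - N_{i+1}(T) \;=\; \langle \wt(T),\al_i^\vee\rangle,
\]
where the first equality uses that the bracketing procedure pairs off equal numbers of $i$'s and $(i+1)$'s, and the second is immediate from the formula $\langle \Be_j,\al_i^\vee\rangle=\delta_{ij}-\delta_{i,j-1}$ given in the preliminaries. The main obstacle in this program is the well-definedness step: it requires a careful case analysis of how the far-east reading order interacts with the bracketing rule, while everything after that is bookkeeping.
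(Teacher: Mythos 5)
The paper itself does not prove this theorem; it is recalled from the literature with a citation to Hong--Kang, so there is no in-paper argument to compare your plan against. Your route is the standard textbook one (well-definedness of $\tf_i,\te_i$, then the observation that the unbracketed subword has the form $(i+1)^{\varepsilon_i}\,i^{\varphi_i}$, then the counting identity $\varphi_i-\varepsilon_i=N_i-N_{i+1}$ for axiom (ii)), and the latter two stages are fine as sketches.

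However, the step you yourself single out as the main obstacle --- well-definedness of $\tf_i$ --- rests on the wrong case analysis. Write $(r,c)$ for the cell whose entry $i$ is changed to $i+1$. Semistandardness of the result requires the entry in cell $(r,c+1)$, if that cell exists, to be at least $i+1$, i.e.\ \emph{not equal to $i$}, and the entry in cell $(r+1,c)$ to be at least $i+2$, i.e.\ not equal to $i+1$. You instead claim that neither neighbor can contain $i+1$. For the cell below, that is the right condition and your bracketing contradiction is the standard one. For the cell to the right, it is both unnecessary and false: an $i+1$ immediately to the right of the changed cell is harmless for weak increase along the row and occurs all the time. For instance, take $T$ to be the single-row tableau with entries $1,2$ and $i=1$: here $\mathrm{FarEast}(T)=(2,1)$, the $1$ is unbracketed, $\tf_1$ changes it to $2$, and the cell to its right contains $i+1=2$ with no contradiction anywhere. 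Meanwhile the case that genuinely must be excluded --- an $i$ in cell $(r,c+1)$ --- is never addressed. Closing it requires using that $\tf_i$ acts on the \emph{rightmost} unbracketed $i$ of the tableau (equivalently, the leftmost in the far-eastern reading, since columns are read right to left): an $i$ at $(r,c+1)$ lies in an earlier column of the reading, hence cannot be unbracketed by the choice of cell, and one must then argue that the $i+1$ it is bracketed with would force the $i$ at $(r,c)$ to be bracketed as well, a contradiction. A symmetric correction is needed for $\te_i$, where the dangerous neighbors are an $i+1$ immediately to the left and an $i$ immediately above. As written, your argument does not establish the row condition, so the plan has a genuine gap at its central step.
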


\begin{Example}
Let $n=4$ and $\la=(5,2,2)$ and consider
\[
T =
\begin{ytableau}
1 & 2 & 2 & 2 & 3 \\
3 & 3 \\
4 & 4 
\end{ytableau}
\]
Let us compute $\varphi_2(T)$ and $\tilde{f}_2(T)$.
The far-eastern reading of $T$ is
\[
\mathrm{FarEast}(\La)=(3,2,2,2,3,4,1,3,4).
\]
To find the $2$-bracketing of this, first cross out the rightmost $2$ having a $3$ somewhere to its right, and also cross out the leftmost of those $3$'s: 
$(3,2,2,\xcancel{2},\xcancel{3},4,1,3,4)$. Repeating this step once more, ignoring crossed out entries, we obtain
$(3,2,\xcancel{2},\xcancel{2},\xcancel{3},4,1,\xcancel{3},4)$
at which point the $2$-bracketing is finished, as $(2,3)$ is not a subsequence anymore (ignoring crossed out entries).
So
\[
[(3,2,2,2,3,4,1,3,4)]_2=(3,2,\xcancel{2},\xcancel{2},\xcancel{3},4,1,\xcancel{3},4).
\]
Now we can compute:
\[
\varphi_2(T)
=\text{number of $2$-unbracketed $2$'s in 
$(3,2,\xcancel{2},\xcancel{2},\xcancel{3},4,1,\xcancel{3},4)$}=1
\]

\begin{align*}
\widetilde{f}_2(T) &= \mathrm{FarEast}^{-1}\big(\text{change rightmost $2$ in $(3,2,\xcancel{2},\xcancel{2},\xcancel{3},4,1,\xcancel{3},4)$
 to $3$}\big) \\
 &= \mathrm{FarEast}^{-1}\big( (3,3,\xcancel{2},\xcancel{2},\xcancel{3},4,1,\xcancel{3},4)\big) 
=\begin{ytableau}
1 & 2 & 2 & 3 & 3 \\
3 & 3 \\
4 & 4 
\end{ytableau}
\end{align*}
Note that when applying $\mathrm{FarEast}^{-1}$ we ignore the bracketing.
\end{Example}

\begin{Remark}\label{def:tbrac}
The $i$-bracketing can be described directly on the tableaux $T$ as follows. Go through all the columns of $T$ from left to right and do the following. If the column contains an $i$ and there is a thus-far-unbracketed $i+1$ in the same column, or in a column further to the left, then cross out that $i$ along with the rightmost of those $i+1$'s. 
Then $\varphi_i(T)$ is the number of $i$-unbracketed $i$'s in $T$;
$\ep_i(T)$ is the number of $i$-unbracketed $i+1$'s in $T$;
$\tf_i(T)$ is obtained from $T$ by changing the rightmost $i$-unbracketed $i$ in $T$ to $i+1$;
$\te_i(T)$ is obtained from $T$ by changing the leftmost $i$-unbracketed $i+1$ in $T$ to $i$.
\end{Remark}

\subsection{Gelfand-Tsetlin patterns}
Let $n$ be a positive integers and $\lambda$ be a partition with $n$ or fewer parts.
For us, a \emph{Gelfand-Tsetlin pattern} (GTP) with $n$ rows and top row $\lambda$ is a triangular array of integers
\[
\Lambda = 
    \begin{Bmatrix}
    \lambda^{(n)}_1 & & \lambda^{(n)}_2 & & \lambda^{(n)}_3 & & \cdots & & \lambda^{(n)}_n \\
    & \lambda^{(n-1)}_1 & & \lambda^{(n-1)}_2 & & \cdots & & \lambda^{(n-1)}_{n-1} \\
    & & \lambda^{(n-2)}_1 & & \cdots & & \lambda^{(n-2)}_{n-2} \\
    & & & \ddots & & \adots \\
    & & & & \lambda^{(1)}_{1}
\end{Bmatrix}
\]
where 
\begin{enumerate}[\rm(i)]
\item $\lambda^{(i)}_j \in \mathbb{Z}_{\geq 0}$ for $1\le j\le i\le n$,
\item $\lambda^{(i+1)}_j \geq \lambda^{(i)}_j \geq \lambda^{(i+1)}_{j+1}$ for $1\le j\le i\le n-1$,
\item $\lambda^{(n)}_i = \lambda_i$ for $1\le i\le n$.
\end{enumerate}
Condition (ii) is known as the \emph{interleaving condition}. By convention we set $\lambda^{(i)}_j=0$ if not $1\le j\le i\le n$.
  Let $\GTP(n,\lambda)$ denote the set of Gelfand-Tsetlin patterns with $n$ rows and top row $\lambda$.

\subsection{Bijection between tableaux and patterns}
\label{sec:natural-bijection}
There is a well-known and natural bijection $\mathcal{T}$ between $\GTP(n,\lambda)$ and $\SSYT(n,\lambda)$. 
Given a Gelfand-Tsetlin pattern $\Lambda \in \GTP(n,\lambda)$, we obtain a tableau $T=\mathcal{T}(\La) \in \SSYT(n,\lambda)$ by inserting $i$ into the squares of the skew diagram $\YD(\lambda^{(i)} / \lambda^{(i-1)})$, for $i=1,2,\dots,n$, where by convention $\lambda^{(0)}$ is the empty partition.
Conversely, given $T \in \SSYT(n,\lambda)$, we obtain a pattern $\Lambda=\mathcal{T}^{-1}(T) \in \GTP(n,\lambda)$ as follows. Define the top row $\lambda^{(n)}$ of $\Lambda$ to be the shape of $T$. That is, $\la^{(n)}=\lambda$. Then, delete all boxes from $T$ containing the symbol $n$ to obtain tableau $T^{(n-1)}$ and define the next row $\la^{(n-1)}$ of $\Lambda$ to be the shape of $T^{(n-1)}$. Continue in this fashion until all the boxes of $T$ have been deleted. Then all the rows of $\Lambda$ have been specified.

\section{Crystal structure on Gelfand-Tsetlin patterns}
\label{sec:crystal-structure}
In this section we prove the first main result of the paper.
We equip the set of Gelfand-Tsetlin patterns with explicit crystal data, and prove that this makes $\GTP(n,\la)$ into a crystal of type $A_{n-1}$. To define the crystal data we will need some notation.

\begin{figure}
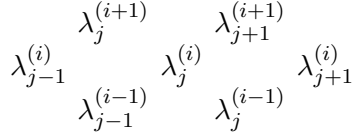

\[
\begin{matrix}
            & \la^{(i+1)}_j   &         & \la^{(i+1)}_{j+1} &  \\
\la^{(i)}_{j-1} &               & \la^{(i)}_j  &               & \la^{(i)}_{j+1}\\
            & \la^{(i-1)}_{j-1} &           & \la^{(i-1)}_j & 
\end{matrix}
\]
\caption{Part of a Gelfand-Tsetlin pattern}
\end{figure}

\begin{figure}
\centering
\parbox{2.5in}{
\begin{center}
$\begin{matrix}
     & &  & -\lambda^{(i+1)}_{j+1} &  &  & \\
     &  & {\color{red} \swarrow}  &  & {\color{red} \nwarrow} &  & \\
     & +\lambda^{(i)}_{j} &  &  &  & +\lambda^{(i)}_{j+1} & \\
     &  & {\color{red} \searrow} &  & {\color{red} \nearrow} &  & \\
     &  &  & -\lambda^{(i-1)}_{j} &  &  & 
\end{matrix}$\\[1em]
\textsc{Figure} \ref{fig:diamond}a. $a^{(i)}_j(\La)$\end{center}
}
\qquad
\parbox{2.5in}{
\begin{center}
$\begin{matrix}
      & &  & +\lambda^{(i+1)}_{j} &  &  &  \\
     &  & {\color{red} \nearrow}  &  & {\color{red} \searrow} &  & \\
     & -\lambda^{(i)}_{j-1} &  &  &  & -\lambda^{(i)}_{j} & \\
     &  & {\color{red} \nwarrow} &  & {\color{red} \swarrow} &  & \\
     &  &  & +\lambda^{(i-1)}_{j-1} &  &  & 
\end{matrix}$\\[1em]
\textsc{Figure} \ref{fig:diamond}b. $b^{(i)}_j(\La)$\end{center}
}
\caption{Computing diamond numbers in Gelfand-Tsetlin patterns}
\label{fig:diamond}
\end{figure}

We introduce the following \emph{diamond numbers},
which are alternating sums around a diamond shape in $\La$ starting at $\la^{(i)}_j$:
\begin{subequations}\label{eq:diamonds}
\begin{align}
a^{(i)}_j(\Lambda) &:= \lambda^{(i)}_j - \lambda^{(i-1)}_j +  \lambda^{(i)}_{j+1} - \lambda^{(i+1)}_{j+1}, \quad 0\le j\le i, \\
b^{(i)}_j(\Lambda) &:= - \lambda^{(i)}_j + \lambda^{(i-1)}_{j-1} - \lambda^{(i)}_{j-1} + \lambda^{(i+1)}_j, \quad 1\le j\le i+1,
\end{align}
\end{subequations}
where by convention $\la^{(i)}_j=0$ if not $1\le j\le i$.
Note that,
\begin{equation}
b_j^{(i)}(\Lambda)=-a_{j-1}^{(i)}(\La),\quad 1\le j\le i+1,
\end{equation}
and, by the interleaving conditions,
\begin{equation}\label{eq:ab-ineqs}
a_0^{(i)}(\La)\le 0,\quad b^{(i)}_{i+1}(\La)\le 0.
\end{equation}
For notational convenience we put
\begin{equation}
a_j^{(i)}(\Lambda)=0 \;\; \forall j>i, \qquad 
b_j^{(i)}(\Lambda)=0 \;\; \forall j>i+1.
\end{equation}
Next, define these \emph{diamond-sums}:
\begin{align}
\label{eq:diamond-sumA}
A^{(i)}_j(\La) &:= \sum_{k=j}^{i} a_k^{(i)}(\Lambda),\quad 0\le j\le i, \\
\label{eq:diamond-sumB}
B^{(i)}_j(\La) &:= \sum_{k=1}^j b_k^{(i)}(\Lambda), \quad 1\le j\le i+1.
\end{align}
Note that \eqref{eq:ab-ineqs} imply
\begin{equation}
A_0^{(i)}(\La)\le A_1^{(i)}(\La),\quad B_{i+1}^{(i)}(\La)\le B^{(i)}_i(\La).
\end{equation}
The following relation will be useful:
\begin{equation} \label{eq:AB-relation}
A_0^{(i)}(\La)=A_j^{(i)}(\La)-B_j^{(i)}(\La)=-B_{i+1}^{(i)}(\La)\qquad
\forall j\in\{0,1,\ldots,i+1\}.
\end{equation}

\begin{Remark}
\label{rem:watyam}
In \cite{WatYam2019} the authors give the formula for the $\mathbf{i}_A$-string datum,
where $\mathbf{i}_A$ is the reduced long word $(1,2,1,3,2,1,\ldots,n-1,n-2,\ldots,1)$. 
Converting their notation (their $a_{ij}$ is our $\la^{(n+i-j)}_i$) gives
the formula
$d_{i,j}(\Lambda)=\sum_{m=1}^{j-i}(\la^{(j)}_m-\la^{(j-1)}_m),\qquad 1\le i<j\le n$.
\end{Remark}

If $\La\in\GTP(n,\la)$, let $\La\pm\Delta^{(i)}_j$ denote the array of integers obtained from $\La$ by replacing $\la^{(i)}_j$ by $\la^{(i)}_j\pm 1$. (In general, the resulting array is not a valid Gelfand-Tsetlin pattern.)

\begin{Definition} \label{def:GT-crystal-data}
Let $P$ be the weight lattice of type $A_{n-1}$. Put $\omega_i=\sum_{j=1}^i\boldsymbol{e}_j$. Define for any $\Lambda\in\GTP(n, \lambda)$ and $i\in\{1,2,\ldots,n-1\}$:
\begin{align}
\wt(\La) &
=\sum_{j=1}^n \big(\sum_{k=1}^j \lambda^{(j)}_k - \sum_{k=1}^{j-1} \la^{(j-1)}_k\big)\boldsymbol{e}_j  \label{eq:wt} \\
&=A_0^{(1)}(\La)\omega_1+A_0^{(2)}(\La)\omega_2+\cdots+A_0^{(n)}(\La)\omega_n \nonumber  \\
&=-\big(B_{2}^{(1)}(\La)\omega_1+B_{3}^{(2)}(\La)\omega_2+\cdots+B_{n+1}^{(n)}(\La)\omega_n\big); \nonumber \\
\varphi_i(\Lambda) &= \max \big\{ A_1^{(i)}(\Lambda), A_2^{(i)}(\Lambda),\ldots, A_i^{(i)}(\Lambda) \big\};\\
\varepsilon_i(\Lambda) &= \max \big\{ B_1^{(i)}(\Lambda), B_2^{(i)}(\Lambda), \dots, B_i^{(i)}(\Lambda) \big\};\\
\widetilde{f}_i (\Lambda) &= \begin{cases}
 \Lambda - \Delta^{(i)}_\ell, &\text{if $\varphi_i(\La)>0$,} \\
 0, &\text{if $\varphi_i(\Lambda)=0$,}
 \end{cases}\\
& \quad \text{where }\ell = \max\big\{j\in\{1,2,\ldots,i\}\mid A^{(i)}_j(\Lambda)=\varphi_i(\Lambda) \big\};
 \nonumber \\
\widetilde{e}_i (\Lambda) &= 
\begin{cases}
 \Lambda + \Delta^{(i)}_\ell, &\text{if $\varepsilon_i(\La)>0$,}\\
 0, &\text{if $\varepsilon_i(\Lambda)=0$,}
\end{cases} \\
& \quad \text{where } \ell = \min\big\{j\in\{1,2,\ldots,i\}\mid B^{(i)}_j(\Lambda)=\varepsilon_i(\Lambda) \big\}. \nonumber
\end{align}
\end{Definition}

\begin{Theorem}
Let $n$ be any positive integer and $\la$ be a partition with $n$ or fewer parts. Then the set $\GTP(n,\la)$ of all Gelfand-Tsetlin patterns with $n$ rows and top row $\la$, equipped with the crystal data $\wt,\tf_i,\te_i,\varphi_i,\varepsilon_i$ as in Definition \ref{def:GT-crystal-data}, is a crystal of type $A_{n-1}$.
\end{Theorem}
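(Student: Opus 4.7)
The plan is to check the axioms of a crystal in turn. Axiom (iii) is vacuous: since $\varphi_i(\La)$ and $\varepsilon_i(\La)$ are maxima of nonempty finite sets of integers, they are always finite. The key tool throughout will be the relation $A_j^{(i)}(\La) - B_j^{(i)}(\La) = A_0^{(i)}(\La)$ from \eqref{eq:AB-relation}, which says that the $B_j^{(i)}$'s are obtained from the $A_j^{(i)}$'s by a uniform shift.

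First I would verify that the three displayed expressions for $\wt(\La)$ are equivalent. Equivalence of the second and third comes directly from $A_0^{(i)} = -B_{i+1}^{(i)}$ in \eqref{eq:AB-relation}; equivalence with the first follows after collecting coefficients of each $\Be_j$ and using the relation $\sum_i \Be_i = 0$ in the weight lattice $P$ of type $A_{n-1}$. Since $\langle \omega_i, \alpha_j^\vee \rangle = \delta_{ij}$, the $\omega_i$-expression then gives $\langle \wt(\La), \alpha_i^\vee \rangle = A_0^{(i)}(\La)$. Combining with the shift relation yields
\[
\varepsilon_i(\La) = \max_j B_j^{(i)}(\La) = \max_j A_j^{(i)}(\La) - A_0^{(i)}(\La) = \varphi_i(\La) - \langle \wt(\La), \alpha_i^\vee \rangle,
\]
which is axiom (ii).

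The crux of axiom (i) is the well-definedness of $\tf_i$. When $\varphi_i(\La) > 0$ and $\ell$ is the largest index with $A_\ell^{(i)}(\La) = \varphi_i(\La)$, one first shows $a_\ell^{(i)}(\La) > 0$: this is $A_i^{(i)} = a_i^{(i)} > 0$ when $\ell = i$, and $a_\ell^{(i)} = A_\ell^{(i)} - A_{\ell+1}^{(i)} > 0$ when $\ell < i$ by maximality of $\ell$. Writing
\[
a_\ell^{(i)}(\La) = \bigl(\la^{(i)}_\ell - \la^{(i-1)}_\ell\bigr) + \bigl(\la^{(i)}_{\ell+1} - \la^{(i+1)}_{\ell+1}\bigr),
\]
the first summand is $\geq 0$ and the second is $\leq 0$ by interleaving. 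A brief case analysis (substituting either $\la^{(i)}_\ell = \la^{(i-1)}_\ell$ or $\la^{(i)}_\ell = \la^{(i+1)}_{\ell+1}$ into $a_\ell^{(i)}$ and applying the remaining interleaving inequality) reduces $a_\ell^{(i)}$ to a nonpositive quantity, contradicting its positivity. Hence both strict inequalities $\la^{(i)}_\ell > \la^{(i-1)}_\ell$ and $\la^{(i)}_\ell > \la^{(i+1)}_{\ell+1}$ must hold, and these together with the automatic inequalities $\la^{(i+1)}_\ell \geq \la^{(i)}_\ell - 1$ and $\la^{(i-1)}_{\ell-1} \geq \la^{(i)}_\ell - 1$ show that $\La - \Delta^{(i)}_\ell$ still satisfies all interleaving conditions. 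Well-definedness of $\te_i$ follows by the symmetric argument applied to $b_m^{(i)}(\La) > 0$ at the minimal index $m$.

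The remaining content of axiom (i) follows by bookkeeping. Decreasing $\la^{(i)}_\ell$ by one shifts $A_j^{(i)}$ by $-2, -1, 0$ for $j < \ell$, $j = \ell$, $j > \ell$ respectively, and dually shifts $B_j^{(i)}$ by $0, +1, +2$. The weight shift $\wt(\tf_i \La) = \wt(\La) - \al_i$ is a direct coefficient computation in the $\Be_j$-expression. Combining the shift data with maximality of $A_\ell^{(i)} = \varphi_i(\La)$ gives $\varphi_i(\tf_i \La) = \varphi_i(\La) - 1$; axiom (ii) then forces $\varepsilon_i(\tf_i \La) = \varepsilon_i(\La) + 1$. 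For the inversion identity, observe that $B_\ell^{(i)}(\tf_i \La) = B_\ell^{(i)}(\La) + 1 = \varepsilon_i(\La) + 1 = \varepsilon_i(\tf_i \La)$, while $B_j^{(i)}(\tf_i \La) = B_j^{(i)}(\La) < \varepsilon_i(\tf_i \La)$ for $j < \ell$, so $\ell$ is the minimal index achieving the new maximum and $\te_i(\tf_i \La) = \La$; the reverse direction is symmetric. The principal obstacle is the well-definedness step: one must extract two strict interleaving inequalities from the single scalar positivity $a_\ell^{(i)} > 0$, and the diamond numbers are designed precisely to make this deduction clean.
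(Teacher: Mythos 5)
Your proof is correct and follows essentially the same route as the paper's: positivity of the extremal diamond number combined with the interleaving inequalities for well-definedness of $\tf_i$ and $\te_i$, the bookkeeping of how $A_j^{(i)}$ and $B_j^{(i)}$ shift under $\La\mapsto\La\mp\Delta_\ell^{(i)}$ for axiom (i), and the uniform-shift relation \eqref{eq:AB-relation} for axiom (ii). One remark worth recording: your shift table for $A_j^{(i)}$ under $\La\mapsto\La-\Delta_\ell^{(i)}$ (namely $-2$ for $j<\ell$, $-1$ for $j=\ell$, $0$ for $j>\ell$) is the correct one, whereas the corresponding displayed formula in the paper's proof has the cases $j<\ell$ and $j>\ell$ transposed --- evidently a typo, since with the formula as printed $\varphi_i(\tf_i\La)=\varphi_i(\La)-1$ would fail whenever the maximum $\varphi_i(\La)$ is also attained at some index below $\ell$.
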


\begin{proof}
Let $i\in\{1,2,\ldots,n-1\}$ be arbitrary.
First we show that if 
$\La\in\GTP(n,\la)$ is such that $\varphi_i(\La)>0$,
then $\tf_i(\Lambda)$ is a valid Gelfand-Tsetlin pattern. Let 
$\ell=\max\{j\in\{1,2,\ldots,i\}\mid A_j^{(i)}=\varphi_i(\La)\}$.
Then by definition, $\tf_i(\La)=\Lambda-\Delta^{(i)}_\ell$ which has integer entries, the top row still equals $\la$ (since $i<n$), and the interleaving conditions hold everywhere except possibly near the $\la_\ell^{(i)}$ entry. More precisely, we must show show the following inequalities hold:

\begin{equation} \label{eq:crystal-pf-inequalities}
\begin{matrix}
\la_\ell^{(i+1)} &     &  &     & \la_{\ell+1}^{(i+1)} \\
                 & \mathbin{\rotatebox[origin=c]{-45}{$\ge$}} &  & \mathbin{\rotatebox[origin=c]{45}{$\ge$}} & \\
                &     &\la_\ell^{(i)}-1 & & \\
                 & \mathbin{\rotatebox[origin=c]{45}{$\ge$}} &  & \mathbin{\rotatebox[origin=c]{-45}{$\ge$}} & \\
\la_{\ell-1}^{(i-1)} &     &  &     & \la_\ell^{(i-1)}
\end{matrix}
\end{equation}

We have that $\varphi_i(\La)=A_\ell^{(i)}(\La) = a_{\ell}^{(i)}(\La)+a_{\ell+1}^{(i)}(\La)+\cdots+a_i^{(i)}(\La)$. 
Note that $a_\ell^{(i)}(\La)>0$, otherwise $j=\ell+1$ would satisfy $A_j^{(i)}(\La)=\varphi_i(\La)$  (we can't have $A_{\ell+1}^{(i)}(\La)>\varphi_i(\La)$ by the definitions of $\varphi_i$ and $\ell$) contradicting maximality of $\ell$.
Now, $a_\ell^{(i)}>0$ is equivalent to 
\begin{equation}\label{eq:crystal-pf-eq0}
\la_\ell^{(i)}-\la_\ell^{(i-1)}+\la_{\ell+1}^{(i)}-\la_{\ell+1}^{(i+1)}>0
\end{equation}
by definition of $a_\ell^{(i)}$.
Since all entries of $\La$ are integers, \eqref{eq:crystal-pf-eq0} implies that
\begin{equation}\label{eq:crystal-pf-eq1}
\la_\ell^{(i)}-1 \ge \la_\ell^{(i-1)}+\la_{\ell+1}^{(i+1)}-\la_{\ell+1}^{(i)}.
\end{equation}
By the interleaving condition for $\La$,
\begin{equation}\label{eq:crystal-pf-eq2}
\la_{\ell+1}^{(i+1)}\ge \la_{\ell+1}^{(i)},
\quad\text{ and }\quad \la_{\ell}^{(i-1)}\ge \la_{\ell+1}^{(i)}.
\end{equation}
Combining \eqref{eq:crystal-pf-eq1} and \eqref{eq:crystal-pf-eq2} we obtain
\begin{equation}
\la_{\ell}^{(i)}-1\ge\la_\ell^{(i-1)}\quad\text{and}\quad \la_\ell^{(i)}-1\ge \la_{\ell+1}^{(i+1)}
\end{equation}
which are the two rightmost inequalities in \eqref{eq:crystal-pf-inequalities}.
The two leftmost inequalities in \eqref{eq:crystal-pf-inequalities} are trivial since 
$\la_\ell^{(i+1)}\ge \la^{(i)}$ and $\la_{\ell-1}^{(i-1)}\ge \la_\ell^{(i)}$ by the interleaving conditions for $\La$.
This shows that if $\varphi_i(\La)>0$ then $\tf_i(\La)\in\GTP(n,\la)$.

Next, suppose that $\ep_i(\La)>0$. We must show that $\te_i(\La)\in\GTP(n,\la)$. We have $\ep_i(\La)=\max\{B_1^{(i)}(\La),\ldots,B_i^{(i)}(\La)\}$.
Let $\ell=\min\{j\in\{1,2,\ldots,i\}\mid B_j^{(i)}(\La)=\ep_i(\La)\}$.
Then $\ep_i(\La)=B_{\ell}^{(i)}=b_1^{(i)}(\La)+b_2^{(i)}(\La)+\cdots+b_\ell^{(i)}(\La)$. As before, $b_\ell^{(i)}(\La)>0$ by the minimality of $\ell$. So
\begin{equation}\label{eq:crystal-pf-eq5}
-\la_{\ell-1}^{(i)}+\la_{\ell-1}^{(i-1)}-\la_{\ell}^{(i)}+\la_{\ell}^{(i+1)}>0.
\end{equation}
We have $\te_i(\La)=\La+\Delta^{(i)}_\ell$ and hence we must show that
\begin{equation} \label{eq:crystal-pf-inequalities-e}
\begin{matrix}
\la_\ell^{(i+1)} &     &  &     & \la_{\ell+1}^{(i+1)} \\
                 & \mathbin{\rotatebox[origin=c]{-45}{$\ge$}} &  & \mathbin{\rotatebox[origin=c]{45}{$\ge$}} & \\
                &     &\la_\ell^{(i)}+1 & & \\
                 & \mathbin{\rotatebox[origin=c]{45}{$\ge$}} &  & \mathbin{\rotatebox[origin=c]{-45}{$\ge$}} & \\
\la_{\ell-1}^{(i-1)} &     &  &     & \la_\ell^{(i-1)}
\end{matrix}
\end{equation}
Analogously to the previous case, the rightmost two inequalities
$\la_\ell^{(i)}+1\ge \la_{\ell+1}^{(i+1)}$ and 
$\la_\ell^{(i)}+1\ge \la_{\ell+1}^{(i-1)}$ hold trivially by the interleaving conditions for $\La$.
By \eqref{eq:crystal-pf-eq5} we have
\begin{equation}
\la_\ell^{(i)}+1\le -\la_{\ell-1}^{(i)}+\la_{\ell-1}^{(i-1)}+\la_{\ell}^{(i+1)}
\end{equation}
which together with $\la_{\ell-1}^{(i)}\ge\la_{\ell-1}^{(i-1)}$ and $\la_{\ell-1}^{(i)}\ge\la_{\ell}^{(i+1)}$ which hold by the interleaving condition for $\La$, yields
the leftmost two inequalities in \eqref{eq:crystal-pf-inequalities-e}.
This shows that if $\ep_i(\La)>0$ then $\te_i(\La)\in\GTP(n,\la)$.

Next we show that property (i) in the definition of crystal holds.
First we show that $\tf_i(\Lambda)=\Lambda'$ iff $\te_i(\Lambda')=\Lambda$.
Suppose $\tf_i(\Lambda)=\Lambda'$. In particular $\varphi_i(\La)>0$.
Then we need to prove $\te_i(\Lambda')=\Lambda$.
We have $\Lambda'=\Lambda-\Delta^{(i)}_\ell$ where $\ell$ is defined by
\begin{equation}\label{eq:GT-crystal-pf-fe-ell}
\ell=\max\{j\in\{1,2,\ldots,i\}\mid A^{(i)}_j(\La)=\varphi_i(\La)\}.
\end{equation}
First we show that $\ep_i(\Lambda')>0$.
By definition, $\ep_i(\Lambda')=\max\{B_1^{(i)}(\La'),B_2^{(i)}(\La'),\ldots,B_i^{(i)}(\La')\}$. So it suffices to show that $B_j^{(i)}(\La')>0$ for some $j$. For $j=\ell$ we have:
\begin{equation}\label{eq:GT-crystal-pf-fe1}
B_\ell^{(i)}(\La')=b_1^{(i)}(\La')+b_2^{(i)}(\La')+\cdots+b_\ell^{(i)}(\La')=B^{(i)}_\ell(\La)+1
\end{equation}
since $b_j^{(i)}(\La')=b_j^{(i)}(\La)$ for $j=1,\ldots,i-1$ while $b_{\ell}^{(i)}(\La')=b_{\ell}^{(i)}(\La)+1$ by definition of $b_j^{(i)}(\La)$.
By \eqref{eq:AB-relation}, 
\begin{equation}\label{eq:GT-crystal-pf-fe2}
B_\ell^{(i)}(\La)+1 = A_\ell^{(i)}(\La)-A_0^{(i)}(\La)+1 = \varphi_i(\La)-A_0^{(i)}(\La)+1
\end{equation}
By definition of $\varphi_i$ we have 
\begin{equation}\label{eq:GT-crystal-pf-fe3}
\varphi_i(\La)-A_0^{(i)}(\La)\ge 0.
\end{equation}
Now \eqref{eq:GT-crystal-pf-fe1}-\eqref{eq:GT-crystal-pf-fe3} imply $B_\ell^{(i)}(\La')>0$, hence $\ep_i(\La')>0$.
It remains to be shown that $\te_i(\La')=\La$.
Since $\La=\La'+\Delta^{(i)}_\ell$, we have to show that 
\begin{equation}\label{eq:GT-crystal-pf-fe-mustshow}
\ell=\min\{j\in\{1,2,\ldots,i\}\mid B_j^{(i)}(\La')=\ep_i(\La')\}.
\end{equation}
For $1\le j<\ell$ we saw that $B_j^{(i)}(\La')=B_j^{(i)}(\La)$ and \eqref{eq:AB-relation} implies that $B_j^{(i)}(\La)<B_\ell^{(i)}(\La)$, while $B_\ell^{(i)}(\La')=1+B_\ell^{(i)}(\La)$.
So $\ep_i(\La')\ge B_\ell^{(i)}(\La')$ and we will show equality.
For $\ell<j\le i$ we have, by definition of $b_j^{(i)}(\La)$,
\begin{equation}\label{eq:GT-crystal-pf-fe4}
B_j^{(i)}(\La')=2+B_j^{(i)}(\La),
\end{equation}
and by \eqref{eq:AB-relation}, 
\begin{equation}\label{eq:GT-crystal-pf-fe5}
B_j^{(i)}(\La)=A_j^{(i)}(\La)-A_0^{(i)}(\La),
\end{equation}
while by definition of $\ell$, \eqref{eq:GT-crystal-pf-fe-ell}, we have
\begin{equation}\label{eq:GT-crystal-pf-fe6}
A_j^{(i)}(\La)-A_0^{(i)}(\La) < A_\ell^{(i)}(\La)-A_0^{(i)}(\La).
\end{equation}
Thus \eqref{eq:GT-crystal-pf-fe4}-\eqref{eq:GT-crystal-pf-fe6} imply that 
\begin{equation}\label{eq:GT-crystal-pf-fe7}
B_j^{(i)}(\La')\le 1+B_\ell^{(i)}(\La)=B_\ell^{(i)}(\La').
\end{equation}
Therefore $\ep_i(\La')=B_{\ell}^{(i)}(\La')$ and \eqref{eq:GT-crystal-pf-fe-mustshow} holds.

The converse is analogous but we provide some details for the sake  completeness. Suppose that $\te_i(\Lambda')=\Lambda$.
We need to show that $\tf_i(\Lambda)=\Lambda'$.
We have $\ep_i(\La')>0$ and $\La=\La'+\Delta_{\ell}^{(i)}$ where $\ell=\min\{j\in\{1,2,\ldots,i\}\mid B_j^{(i)}(\La')=\ep_i(\La')\}$. 
First we show $\varphi_i(\La)>0$ by showing $A_\ell^{(i)}(\La)>0$. We have $A_\ell^{(i)}(\La)=A_\ell^{(i)}(\La')+1$ and $A_\ell^{(i)}(\La')=\ep_i(\La')-B_{i+1}^{(i)}(\La')\ge 0$ by \eqref{eq:AB-relation}.
It remains to show $\tf_i(\La)=\La'$. Since $\La'=\La-\Delta_\ell^{(i)}$, this is equivalent to showing that $\ell=\max\{j\in\{1,2,\ldots,i\}\mid A_j^{(i)}(\La)=\varphi_i(\La)\}$.
For $\ell<j\le i$ we have $A_j^{(i)}(\La)=A_j^{(i)}(\La')=B_j^{(i)}(\La')-B_{i+1}^{(i)}(\La')\le B_{\ell}^{(i)}(\La')-B_{i+1}^{(i)}(\La')=A_\ell^{(i)}(\La')=A_\ell^{(i)}(\La)-1<A_\ell^{(i)}(\La)$.
So $\ell\le\max\{j\in\{1,2,\ldots,i\}\mid A_j^{(i)}(\La)=\varphi_i(\La)\}$. For $1\le j<\ell$ we have $A_j^{(i)}=2+A_j^{(i)}(\La')=2+B_j^{(i)}(\La')-B_{i+1}^{(i)}(\La')\le 1+B_\ell^{(i)}-B_{i+1}^{(i)}(\La')=1+A_\ell^{(i)}(\La')=A_\ell^{(i)}(\La)$.
This proves the desired equality.

Suppose now that $\tf_i(\La)=\La'$ and $\te_i(\La')=\La$ hold. In this case, all the entries of $\Lambda'$ equal those of $\Lambda$, except for one entry $\la'^{(i)}_\ell$ in the $i$th row which equals $\la^{(i)}_\ell-1$. Therefore 
\begin{align*}
\wt(\Lambda') &=\sum_{j=1}^n \big(\sum_{k=1}^j \lambda'^{(j)}_k - \sum_{k=1}^{j-1} \la'^{(j-1)}_k\big)\boldsymbol{e}_j \\
&=-\boldsymbol{e}_i+\boldsymbol{e}_{i+1}+
\sum_{j=1}^n \big(\sum_{k=1}^j \lambda^{(j)}_k
  - \sum_{k=1}^{j-1} \la^{(j-1)}_k\big)\boldsymbol{e}_j \\
&=\wt(\La)-\al_i
\end{align*}
which is equivalent to $\wt(\La')=\wt(\La)+\al_i$.

To conclude the proof of (i) we need to show 
$\varepsilon_i(\Lambda')=\varepsilon_i(\Lambda)+1$ and
$\varphi_i(\Lambda')=\varphi_i(\Lambda)-1$.
For $1\le j,\ell\le i$ and any $\La\in\GTP(n,\la)$ we have
\[
A^{(i)}_j(\La-\Delta_\ell^{(i)}) =
\begin{cases}
A_j^{(i)}(\La)  , & 0\le j<\ell, \\
A_j^{(i)}(\La)-1, & j=\ell, \\
A_j^{(i)}(\La)-2, & \ell<j\le i.
\end{cases}
\]
Suppose $\varphi_i(\La)>0$ and let $\ell=\max\{j\in\{1,2,\ldots,i\}\mid A_j^{(i)}=\varphi_i(\La)\}$.
Then for all $1\le j\le i$.
$A_j^{(i)}(\La-\Delta_\ell^{(i)})\le \varphi_i(\La)-1$ with equality for $j=\ell$.
Therefore $\varphi_i(\tf_i(\La))=\varphi_i(\La)-1$.

Let $1\le j,\ell\le i$. Then for any $\La\in\GTP(n,\la)$ we have
\[
B^{(i)}_j(\La+\Delta_\ell^{(i)})=
\begin{cases}
B_j^{(i)}(\La)   & 1\le j < \ell,\\
B_j^{(i)}(\La)-1 & j=\ell,\\
B_j^{(i)}(\La)-2 & \ell < j \le i.
\end{cases}
\]
Suppose $\ep_i(\La)>0$ and let $\ell=\min\{j\in\{1,2,\ldots,i\}\mid B_j^{(i)}(\La)=\ep_i(\La) \}$.
Then $B_j^{(i)}(\La+\Delta_\ell^{(i)})\le \ep_i(\La)-1$ with equality for $j=\ell$. Thus $\ep_i(\te_i(\La))=\ep_i(\La)-1$.

For property (ii), we verify that for all $\Lambda\in\GTP(n,\lambda)$ we have
\[\varphi_i(\Lambda)-\varepsilon_i(\Lambda) = \langle \wt(\Lambda), \al_i^\vee \rangle.\]
We have $\varphi_i(\Lambda)=\max\{A_1^{(i)},A_2^{(i)},\ldots,A_i^{(i)}\}$
and $\varepsilon_i(\Lambda)=\max\{B_1^{(i)},B_2^{(i)},\ldots,B_i^{(i)}\}$.
We will use relation \eqref{eq:AB-relation}.
Writing $A^{(i)}_k=A_k^{(i)}(\Lambda)$ for brevity we have
\begin{align*}
\varphi_i(\Lambda)-\varepsilon_i(\Lambda) &= 
\max\{A^{(i)}_1,A^{(i)}_2,\ldots,A^{(i)}_i\}-
 \max\{ A^{(i)}_1-A^{(i)}_0, A^{(i)}_2-A^{(i)}_0,\ldots, A^{(i)}_i-A^{(i)}_0\} \\ 
 &=\max\{A^{(i)}_1,A^{(i)}_2,\ldots,A^{(i)}_i\}-
 \max\{ A^{(i)}_1, A^{(i)}_2,\ldots, A^{(i)}_i\} + A^{(i)}_0 \\
 &=A^{(i)}_0 \\
 &=\sum_{k=0}^i (\la^{(i)}_k+\la^{(i)}_{k+1}-\la^{(i-1)}_k-\la^{(i+1)}_{k+1}) \\
 &=2\sum_{k=1}^i \la^{(i)}_k-\sum_{k=1}^{i-1}\la^{(i-1)}_k - \sum_{k=1}^{i+1}\la^{(i+1)}_k.
\end{align*}
On the other hand, using the first expression for the weight function, we have
\[
  \wt(\Lambda)=\sum_{j=1}^n \big(\sum_{k=1}^j \lambda^{(j)}_k - \sum_{k=1}^{j-1} \la^{(j-1)}_k\big)\boldsymbol{e}_j
\]
so using 
\[
  \langle \boldsymbol{e}_j, \al_i^\vee \rangle 
  = \langle \omega_j-\omega_{j-1}, \al_i^\vee \rangle 
  = \delta_{ji}-\delta_{j-1,i}
\]
 we get
\begin{align*}
 \langle \wt(\Lambda), \al_i^\vee \rangle &=
\langle \sum_{j=1}^n \big(\sum_{k=1}^j \lambda^{(j)}_k 
- \sum_{k=1}^{j-1} \la^{(j-1)}_k\big)\boldsymbol{e}_j ,  \al_i^\vee \rangle \\
&=
\sum_{j=1}^n \big(\sum_{k=1}^j \la^{(j)}_k -\sum_{k=1}^{j-1} \la^{(j-1)}_k \big)(\delta_{ji}-\delta_{j-1,i}) \\
&=2\sum_{k=1}^i \la^{(i)}_k - \sum_{k=1}^{i-1} \la^{(i-1)}_k -\sum_{k=1}^{i+1} \la^{(i+1)}_k
\end{align*}
This shows that $\varphi_i(\Lambda)-\varepsilon_i(\Lambda) = \langle \wt(\Lambda), \al_i^\vee \rangle$, which also equals the coefficient of $\omega_i$ in $\wt(\La)$, proving the second and third equality in \eqref{eq:wt}.

Lastly, since $\varphi_i(\Lambda)$ and $\epsilon_i(\Lambda)$ are never $-\infty$,
condition (iii) in the definition of a crystal is void.
\end{proof}

\section{Crystal isomorphism}
\label{sec:crystal-isomorphism}
In this section we prove our second main result which says that the natural bijection $\mathcal{T}$ from $\SSYT(n,\lambda)$ to $\GTP(n,\lambda)$ described in Section \ref{sec:natural-bijection} is an isomorphism of crystals.

We will let $T_i$ denote the $i$th row of a semistandard Young tableaux $T$, and $T_{\ge \ell}$ the subtableau obtained by deleting the first $\ell-1$ rows,and similarly for $T_{\le \ell}$:
\[
T = \begin{matrix} T_1 \\ T_2 \\ \vdots \\ T_n \end{matrix}
\qquad \qquad
T_{\ge \ell} = \begin{matrix} T_\ell \\ T_{\ell+1} \\ \vdots \\ T_n \end{matrix}
\qquad \qquad
T_{\le \ell} = \begin{matrix} T_1 \\ T_{2} \\ \vdots \\ T_\ell \end{matrix}
\]

The following counting lemma will be useful.

\begin{Lemma}\label{lem:main2}
Let $\La\in \GTP(n,\lambda)$ and $T=\mathcal{T}(\La)$.
\begin{enumerate}[{\rm (a)}]
\item \label{it:lem-main2-N}
For all integers $k$ with $1\le k\le n$, the number of letters $i$ in $T_k$ is equal to $\lambda^{(i)}_k - \lambda^{(i-1)}_k$.
\item \label{it:lem-main2-a}
 $a^{(i)}_j(\Lambda)$ counts the number of $i$'s in $T_j$ minus the number of $(i+1)$'s in $T_{j+1}$.  
\item \label{it:lem-main2-b}
 $b^{(i)}_j(\Lambda)$ counts the number of $(i+1)$'s in $T_j$ minus the number of $i$'s in $T_{j-1}$.  
\item \label{it:lem-main2-A}
 $A^{(i)}_\ell(\Lambda)$ counts the number of $i$'s in $T_{\ge \ell}$ minus the number of $(i+1)$'s in $T_{\ge \ell+1}$.
 \item \label{it:lem-main2-B}
 $B_{\ell}^{(i)}(\Lambda)$ counts the number of $(i+1)$'s in $T_{\le \ell}$ minus the number of $i$'s in $T_{\le \ell-1}$.
\end{enumerate}
\end{Lemma}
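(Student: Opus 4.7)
The proof plan is to establish part (a) directly from the definition of the bijection $\mathcal{T}$, and then derive parts (b)--(e) from (a) by straightforward algebraic manipulation, using a column-strict observation about SSYTs only at the final step.

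For part (a), I would argue as follows. By the definition of $\mathcal{T}$ recalled in Section \ref{sec:natural-bijection}, the boxes of $T$ filled with the symbol $i$ are precisely those boxes in the skew diagram $\YD(\lambda^{(i)}/\lambda^{(i-1)})$. In row $k$, this skew diagram occupies positions $\lambda^{(i-1)}_k + 1$ through $\lambda^{(i)}_k$, so the number of $i$'s in $T_k$ is exactly $\lambda^{(i)}_k - \lambda^{(i-1)}_k$. (The convention $\lambda^{(i)}_j = 0$ whenever the index is out of range automatically handles boundary cases such as $k > i$, where $T_k$ has no $i$'s at all.)

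For (b), I would substitute (a) into the definition $a^{(i)}_j(\Lambda) = (\lambda^{(i)}_j - \lambda^{(i-1)}_j) - (\lambda^{(i+1)}_{j+1} - \lambda^{(i)}_{j+1})$: the first parenthesized term is the number of $i$'s in $T_j$, and the second is the number of $(i+1)$'s in $T_{j+1}$, both by (a). Part (c) is the same trick applied to the definition of $b^{(i)}_j(\Lambda)$, grouped as $(\lambda^{(i+1)}_j - \lambda^{(i)}_j) - (\lambda^{(i)}_{j-1} - \lambda^{(i-1)}_{j-1})$.

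For (d), summing the formula in (b) over $k = \ell, \ell+1, \ldots, i$ gives
\[
A^{(i)}_\ell(\Lambda) = \sum_{k=\ell}^{i} \bigl(\#\,i\text{'s in }T_k\bigr) \;-\; \sum_{k=\ell}^{i} \bigl(\#\,(i+1)\text{'s in }T_{k+1}\bigr).
\]
To upgrade the sums to $T_{\ge \ell}$ and $T_{\ge \ell+1}$, I would invoke the column-strict property of SSYTs: in a tableau with entries from $\{1, \ldots, n\}$, a letter $i$ can appear only in rows $1, 2, \ldots, i$, and a letter $i+1$ only in rows $1, 2, \ldots, i+1$. Hence rows $T_k$ with $k > i$ contribute no $i$'s, and rows $T_{k+1}$ with $k+1 > i+1$ contribute no $(i+1)$'s, so the truncated sums equal the full sums over $T_{\ge \ell}$ and $T_{\ge \ell+1}$ respectively. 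Part (e) is entirely analogous: summing (c) over $k = 1, \ldots, \ell$ gives $B^{(i)}_\ell(\Lambda)$ as the number of $(i+1)$'s in $T_{\le \ell}$ minus the number of $i$'s in $T_{\le \ell - 1}$, with the $k=1$ term automatically handled because $T_0$ is empty (matching the $\lambda^{(i)}_0 = 0$ convention).

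No step here looks like a real obstacle; the only thing to be careful with is the bookkeeping around empty rows and out-of-range indices so that the column-strict truncation in (d) and (e) is airtight. The lemma essentially says the diamond numbers and diamond sums admit a clean tableau-theoretic interpretation, and once (a) is in hand, everything else is summation and an SSYT row-bound.
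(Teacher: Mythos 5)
Your proposal is correct and follows essentially the same route as the paper: establish (a) from the skew-diagram description of $\mathcal{T}$, read off (b) and (c) by regrouping the defining alternating sums, and telescope to get (d) and (e). The only difference is that you spell out the row-bound observation (a letter $i$ occurs only in rows $1,\ldots,i$) needed to identify the truncated sums with counts over $T_{\ge \ell}$ and $T_{\le \ell}$, which the paper leaves implicit.
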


\begin{proof}
(a) The number of boxes in $T_k$ containing a letter from $\{ 1, 2, \dots, i \}$ is $\lambda^{(i)}_k$.
Then (b) and (c) are immediate by part (a) and the definitions,  \eqref{eq:diamonds}, of the diamond numbers.
Now (d) and (e) follow from parts (b) and (c).
\end{proof}

\begin{Theorem} \label{thm:main2}
Let $n$ be a positive integer and $\la$ a partition with $n$ or fewer parts.
The bijection $\mathcal{T}$ from $\GTP(n,\lambda)$ to $\SSYT(n,\lambda)$ given in Section \ref{sec:natural-bijection} is an isomorphism of crystals.
\end{Theorem}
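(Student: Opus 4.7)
Since $\mathcal{T}$ is already a bijection, it suffices to show that it preserves each piece of crystal data: $\wt$, $\varphi_i$, $\varepsilon_i$, $\tilde{f}_i$, and $\tilde{e}_i$. Weight preservation is immediate from part (a) of Lemma \ref{lem:main2}: for $T=\mathcal{T}(\Lambda)$ the number of entries equal to $j$ in $T$ is $\sum_k(\lambda^{(j)}_k-\lambda^{(j-1)}_k)$, which is exactly the coefficient of $\boldsymbol{e}_j$ in $\wt(\Lambda)$ from \eqref{eq:wt}. The remainder of the proof treats the string-length and operator identities together via a column-cumulative analysis of the bracketing of Remark \ref{def:tbrac}.

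\textbf{String-length identity.} Let $\tilde{S}_c$ denote the number of $i$'s minus the number of $(i+1)$'s in the first $c$ columns of $T$. The column bracketing of Remark \ref{def:tbrac} can be recast as a standard left-to-right bracket matching in which each $i$ is read as a right bracket and each $i+1$ as a left bracket, so that, as required, every $i$ is paired with an $(i+1)$ lying in its own column or further to the left; the corresponding stack analysis yields $\varphi_i(T)=\max_{c\ge 0}\tilde{S}_c$. The heart of the argument is then the identity $\tilde{S}_{\lambda^{(i)}_\ell}=A^{(i)}_\ell(\Lambda)$ for every $\ell$, together with the fact that $\max_c\tilde{S}_c$ is always attained at some column of the form $c=\lambda^{(i)}_\ell$. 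Both assertions follow from the interleaving inequalities of $\Lambda$, which force all $(i+1)$'s in rows $>\ell$ into columns $\le\lambda^{(i)}_\ell$ and all $i$'s in rows $<\ell$ into columns $>\lambda^{(i)}_\ell$; combined with part (d) of Lemma \ref{lem:main2} this gives $\varphi_i(T)=\max_\ell A^{(i)}_\ell(\Lambda)=\varphi_i(\Lambda)$. The mirror-image argument together with part (e) gives $\varepsilon_i(T)=\varepsilon_i(\Lambda)$.

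\textbf{Crystal operators.} Both $\tilde{f}_i$ change exactly one $i$ into $i+1$, so what remains is to show that they act on the same entry of $T$. On the GTP side, $\tilde{f}_i(\Lambda)=\Lambda-\Delta^{(i)}_\ell$ with $\ell=\max\{j:A^{(i)}_j(\Lambda)=\varphi_i(\Lambda)\}$; by part (a) of Lemma \ref{lem:main2}, decrementing $\lambda^{(i)}_\ell$ converts one entry of $T_\ell$ from $i$ to $i+1$, and semistandardness forces this to be the rightmost $i$ of row $\ell$. On the SSYT side, $\tilde{f}_i(T)$ changes the rightmost $i$-unbracketed $i$ in $T$ to $i+1$, and in the $\tilde{S}$-language this $i$ occupies the largest column where $\tilde{S}_c$ sets a new maximum. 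By the string-length argument this column is $c=\lambda^{(i)}_\ell$; the maximality of $\ell$ gives $A^{(i)}_\ell(\Lambda)>A^{(i)}_{\ell+1}(\Lambda)$ and hence $\lambda^{(i)}_\ell>\lambda^{(i-1)}_\ell$, so row $\ell$ does contain an $i$ at column $\lambda^{(i)}_\ell$; a short interleaving check rules out any other row having an $i$ in that column, so both procedures modify the same entry. The argument for $\tilde{e}_i$ is entirely analogous using $B^{(i)}_\ell$ and the minimality of the $\ell$ chosen there. The chief technical obstacle is the column-to-row dictionary $\tilde{S}_{\lambda^{(i)}_\ell}=A^{(i)}_\ell(\Lambda)$ together with the placement of $\max_c\tilde{S}_c$, both of which require a careful case analysis of row-by-row contributions using the interleaving inequalities.
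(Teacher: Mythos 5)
Your proposal is correct in outline, and its technical core is genuinely different from the paper's. The paper never introduces column prefix sums: it works row by row, truncating the tableau to $T_{\ge j}$ and $T_{\le j}$, showing $\varphi_i(T_{\ge j_1})=A^{(i)}_{j_1}$ for the lowest row $j_1$ with an unbracketed $i$ and recursing upward, and then proving $\mathcal{T}\tf_i=\tf_i\mathcal{T}$ via a chain of equivalences of the form ``$T$ has an unbracketed $i$ in row $j$ $\Leftrightarrow$ $A^{(i)}_j(\La)>A^{(i)}_k(\La)$ for all $k>j$,'' which locates the relevant row directly. You instead encode the bracketing as a lattice-word/ballot problem via $\tilde{S}_c$, use the record-position characterization of unbracketed letters, and translate columns back to rows through the dictionary $\tilde{S}_{\la^{(i)}_\ell}=A^{(i)}_\ell(\La)$; this dictionary is correct (the interleaving conditions do force the $(i+1)$'s of rows $>\ell$ into columns $\le\la^{(i)}_\ell$ and the $i$'s of rows $<\ell$ into columns $>\la^{(i)}_\ell$, so Lemma \ref{lem:main2}(d) applies), and the $c=0$ base case is covered because $A^{(i)}_i=\la^{(i)}_i-\la^{(i+1)}_{i+1}\ge 0$. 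What your route buys is a cleaner conceptual picture ($\varphi_i$, $\ep_i$ and the acted-upon box are all read off one piecewise-linear function of the column index); what it costs is exactly the verification you defer to a ``careful case analysis'': you must check that the intra-column reading order puts $i+1$ before $i$ (so that same-column pairs cancel in $\tilde{S}_c$), and, for the operator statement, that the \emph{first} column where $\tilde{S}$ attains its maximum is precisely $\la^{(i)}_\ell$ for the \emph{maximal} $\ell$ with $A^{(i)}_\ell=\varphi_i(\La)$ — ruling out an earlier attainment requires the same kind of interleaving-plus-column-strictness argument the paper packages into its equivalence chain. These checks do go through, so I see no gap, only steps asserted rather than written out; the paper also offers a shortcut you could borrow, namely deducing $\ep_i$-preservation from axiom (ii) and the $\te_i$ case formally from the $\tf_i$ case.
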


\begin{proof}
Let $\La\in\GTP(n,\lambda)$, and let $T=\mathcal{T}(\La)$.

\underline{$\wt(\La)=\wt(T)$}:
For each $i\in\{1,2,\ldots,n-1\}$, by Lemma \ref{lem:main2}\eqref{it:lem-main2-N}, $\sum_{j=1}^i \la^{(i)}_j-\sum_{j=1}^{(i-1)} \la^{(i-1)}_j$ equals $N_i(T)$, since the letter $i$ cannot occur below the $i$th row in an SSYT.

Let $i\in \{1,2,\ldots,n-1\}$ be arbitrary. \emph{In the rest of the proof, ``bracketing'' refers to $i$-bracketing.} Put $A^{(i)}_k=A_k^{(i)}(\Lambda)$ and $B^{(i)}_k=B^{(i)}_k(\La)$ for brevity.

\underline{$\varphi_i(\Lambda)=\varphi_i(T)$:}
By definition, $\varphi_i(T)$ is the number of unbracketed $i$'s in 
$T$.
So $\varphi_i(T)\ge \varphi_i(T_{\ge j})$ for any $j\in\{1,2\ldots,i\}$.
Let $j_1\ge j_2\ge\cdots\ge j_k$ be all the rows of $T$ containing at least one unbracketed $i$.
Then $\varphi_i(T_\ge {j_1}) = A_{j_1}^{(i)}$ by Lemma \ref{lem:main2}\eqref{it:lem-main2-A}. Furthermore,
$A_{j_1}^{(i)}>A_{k}^{(i)}$ for $k=i, i-1, \ldots, j_1+1$.
Next, 
$\varphi_i(T_{\ge j_2}) = \varphi_i(T_{\ge{j_1}})+\varphi_i(T_{\ge {j_2}}/T_{\ge {j_1}}) = A_{j_1}^{(i)}+(A_{j_2}^{(i)}-A_{j_1}^{(i)})=A_{j_2}^{(i)}$. (Here $T_{\ge {j_2}}/T_{\ge {j_1}}$ denotes the subtableau of $T$ consisting of row $j_2$ through row $j_1-1$.)
And $A_{j_2}^{(i)}> A_k^{(i)}$ for $k=j_1, j_1-1,\ldots, j_2+1$.
Continuing recursively, we eventually obtain that
$\varphi_i(T)=\varphi_i(T_{\ge {j_k}}) = A_{j_k}^{(i)}>A_{j}^{(i)}$ for $j>j_k$.
It remains to be shown that $A^{(i)}_j\le A^{(i)}_{j_k}$ for $i=j_k-1,j_k-2,\ldots, 1$.
Since $j_k$ is the top row having unbracketed $i$'s, we have $A_j^{(i)}(\La_{\le j_k-1}) \le 0$ for $j=j_k-1, j_k-2,\ldots, 1$, where $\La_{\le r}$ is defined to  be $\mathcal{T}^{-1}(T_{\le r})$ for all $r$.
Since $A_j^{(i)}(\La) - A_{j_k}^{(i)}(\La) = A_j^{(i)}(\La_{\le j_k-1})$, this shows the required inequality.

\underline{$\varepsilon_i(\Lambda)=\varepsilon_i(T)$}:
This part can be proved completely analogously to the case of $\varphi_i$. But it also follows from the case of $\varphi_i$ and the fact that we already know that $\GTP(n,\la)$ and $\SSYT(n,\la)$ are crystals, and hence by property (ii) in the definition of crystal and that $\wt(\La)=\wt(T)$,
\[
\ep_i(\La) = \varphi_i(\La)-\langle\wt(\La),\al_i^\vee\rangle = \varphi_i(T)-\langle\wt(T),\al_i^\vee\rangle = \ep_i(T).
\]

\underline{$\mathcal{T}\big(\tf_i(\Lambda)\big)=\tf_i\big(\mathcal{T}(\Lambda)\big)$}:
We have seen already that $\varphi_i(\La)=\varphi_i(T)$.
Thus $\tf_i(\La)\neq 0$ iff $\tf_i(T)\neq 0$.
Suppose $\tf_i(\La)\neq 0$.
Put $\La'=\tf_i(\La)=\La-\Delta^{(i)}_\ell$, where 
 $\ell=\max\{i\in\{1,2,\ldots,i\}\mid A_j^{(i)}(\La)=\varphi_i(\La)\}$. By definition of the bijection $\mathcal{T}$, the SSYT $\mathcal{T}(\La')$ is obtained from $T$ by changing the rightmost $i$ in row $\ell$ to $i+1$. On the other hand, $\tf_i(T)$ is obtained by changing the rightmost unbracketed $i$ in $T$ to $i+1$. 
 So we must show that $\ell$ equals the row index of the rightmost unbracketed $i$ in $T$.
 First we show that there is an unbracketed $i$ in row $\ell$ of $T$. 
To do this we derive a series of equivalences. Let $j\in\{1,2,\ldots,i\}$ be arbitrary. Then:
\begin{align*}
&\text{$T$ has an unbracketed $i$ in row $j$}\\
\Leftrightarrow \; & \varphi_i(T_{\ge j})>\varphi_i(T_{\ge j+1}) \\
\Leftrightarrow \; & \varphi_i(\La_{\ge j})>\varphi_i(\La_{\ge j+1}) \quad\text{where $\La_{\ge k}:=\mathcal{T}^{-1}(T_{\ge k})$} \\
\Leftrightarrow \; & \max\{A^{(i)}_k(\La_{\ge j})\mid k=1,2,\ldots,i\} > \max\{A^{(i)}_k(\La_{\ge j+1})\mid k=1,2,\ldots,i\} \\
\Leftrightarrow \; & \max\{A^{(i)}_k(\La)\mid k=j,j+1,\ldots,i\} > \max\{A^{(i)}_k(\La)\mid k=j+1,j+2,\ldots,i\} \\
\Leftrightarrow \; & A^{(i)}_j(\La)> A^{(i)}_k(\La) \quad \text{ for all $k\in\{j+1,\, j+2,\,\ldots,\, i\}$.}
\end{align*}
The penultimate equivalence holds by the counting lemma, Lemma \ref{lem:main2}\eqref{it:lem-main2-A}, and that the first row of $T_{\ge j}$ is the $j$th row of $T$ and so on.
Now, by definition of $\ell$ we do indeed have
\[A^{(i)}_\ell(\La)>A^{(i)}_k(\La)\quad\text{for all $k\in\{\ell+1,\, \ell+2,\, \ldots,\, i\}$}\]
and therefore by the above series of equivalences there is at least one unbracketed $i$ in row $\ell$ of $T$.

It remains to show that $\ell$ is the row of the rightmost unbracketed $i$ in $T$. Since any $i$ directly to the right of an unbracketed $i$ is itself unbracketed, any unbracketed $i$ further to the right would have to occur among the top $\ell-1$ rows of $T$.
Any unbracketed $i$ among the top $\ell-1$ rows of $T$ would remain unbracketed when considered as an entry of the truncated tableau $T_{\le\ell-1}$.
So it suffices to show that $T_{\le \ell-1}$ has no unbracketed $i$'s, or equivalently, that $\varphi_i(T_{\le\ell-1})=0$. Let
$\La_{\le\ell-1}=\mathcal{T}^{-1}(T_{\le\ell-1})$. As previously shown,
$\varphi_i(T_{\le\ell-1})=\varphi_i(\La_{\le\ell-1})$. By Lemma \ref{lem:main2}\eqref{it:lem-main2-A},
for all $1\le j\le i$:
\[
A^{(i)}_j(\La_{\le\ell-1}) = A^{(i)}_j(\La)-A^{(i)}_\ell(\La)
\]
which is less than or equal to zero by definition of $\ell$. Hence $\varphi_i(\La_{\le\ell-1})=0$.

\underline{ $\mathcal{T}\big(\te_i(\Lambda)\big)=\te_i\big(\mathcal{T}(\Lambda)\big)$:}
We know that $\ep_i(\La)=\ep_i(T)$. Thus $\te_i(\La)=0$ iff $\te_i(T)=0$. Suppose that $\te_i(\La)\neq 0$. Put $\La'=\te_i(\La)=\La+\Delta^{(i)}_\ell$, where
\[\ell=\min\big\{j\in\{1,2,\ldots,i\}\mid B^{(i)}_j(\La)=\ep_i(\La)\big\}.\]
Also recall that
\[\ep_i(\La)=\max\{B^{(i)}_1(\La),\, B^{(i)}_2(\La),\, \ldots,\, B^{(i)}_i(\La)\}.\]
By definiton of the bijection $\mathcal{T}$, the SSYT $\mathcal{T}(\La')$ is obtained from $T$ by changing the leftmost $i+1$ in row $\ell$ of $T$ to $i$. On the other hand, $\te_i(T)$ is the SSYT obtained from $T$ by changing the leftmost unbracketed $i+1$ to $i$. So we must show that $\ell$ equals the row index of the row in $T$ which contains the leftmost unbracketed $i+1$.

First we show that row $\ell$ of $T$ contains an unbracketed $i+1$.
For this, we derive an equivalent condition.
For all $j\in\{1,2,\ldots,i\}$ we have:
\begin{align*}
&\text{$T$ contains an unbracketed $i+1$ in row $j+1$}\\
\Leftrightarrow \; & \ep_i(T_{\le j+1}) > \ep_i(T_{\le j}) \\
\Leftrightarrow \; & \ep_i(\La_{\le j+1}) > \ep_i(\La_{\le j}) \quad\text{where $\La_{\le k}:=\mathcal{T}^{-1}(T_{\le k})$}\\
\Leftrightarrow \; & \max\{B^{(i)}_k(\La_{\le j+1})\mid k=1,2,\ldots,i\}>\max\{B^{(i)}_k(\La_{\le j})\mid k=1,2,\ldots,i\}\\
\Leftrightarrow \; & B^{(i)}_{j+1}>B^{(i)}_k\quad\text{for all $k\in\{1,2,\ldots,j\}$}
\end{align*}
This condition holds for $j+1=\ell$ by definition of $\ell$. Thus $T$ contains an unbracketed $i+1$ in row $\ell$.

Next we show that no row of $T$ contains an unbracketed $i+1$ further to the left. Such a row $j$ would have to be below $\ell$, i.e. $j\ge i+1$.
By the above equivalences we would get
\[B^{(i)}_j(\La)> B^{(i)}_k(\La)\quad\text{for all $k\in\{1,2,\ldots,j-1\}$}.\]
In particular, $B^{(i)}_j(\La)>B^{(i)}_\ell(\La)$ which contradicts the definition of $\ell$. This finishes the proof that $\mathcal{T}(\te_i(\La))=\te_i(\mathcal{T}(\La))$.

\underline{Alternative proof that $\mathcal{T}\big(\te_i(\Lambda)\big)=\te_i\big(\mathcal{T}(\Lambda)\big)$:}
As is well-known, if a function between crystals preserve the string length functions and intertwines the $\tf_i$ crystal operators, the it automatically intertwines the $\te_i$ crystal operators. We illustrate this for the convenience of the reader.
We know that $\ep_i(\La)=\ep_i(T)$. Thus $\te_i(\La)=0$ iff $\te_i(T)=0$. Suppose that $\te_i(\La)\neq 0$.
Since $\varphi_i(\mathcal{T}(\te_i(\La))=\varphi_i(\te_i(\La))\ge 1$.
Thus we have
\begin{align*}
\mathcal{T}\big(\te_i(\Lambda)\big) &=
\te_i\tf_i\big(\mathcal{T}(\te_i(\Lambda))\big)\\
&=\te_i\mathcal{T}\big(\tf_i\te_i(\La)\big) \quad\text{by $\mathcal{T}\tf_i=\tf_i\mathcal{T}$}\\
&=\te_i\big(\mathcal{T}(\Lambda)\big).
\end{align*}
\end{proof}

\section{Example}
\label{sec:example}

\usetikzlibrary{arrows,quotes}

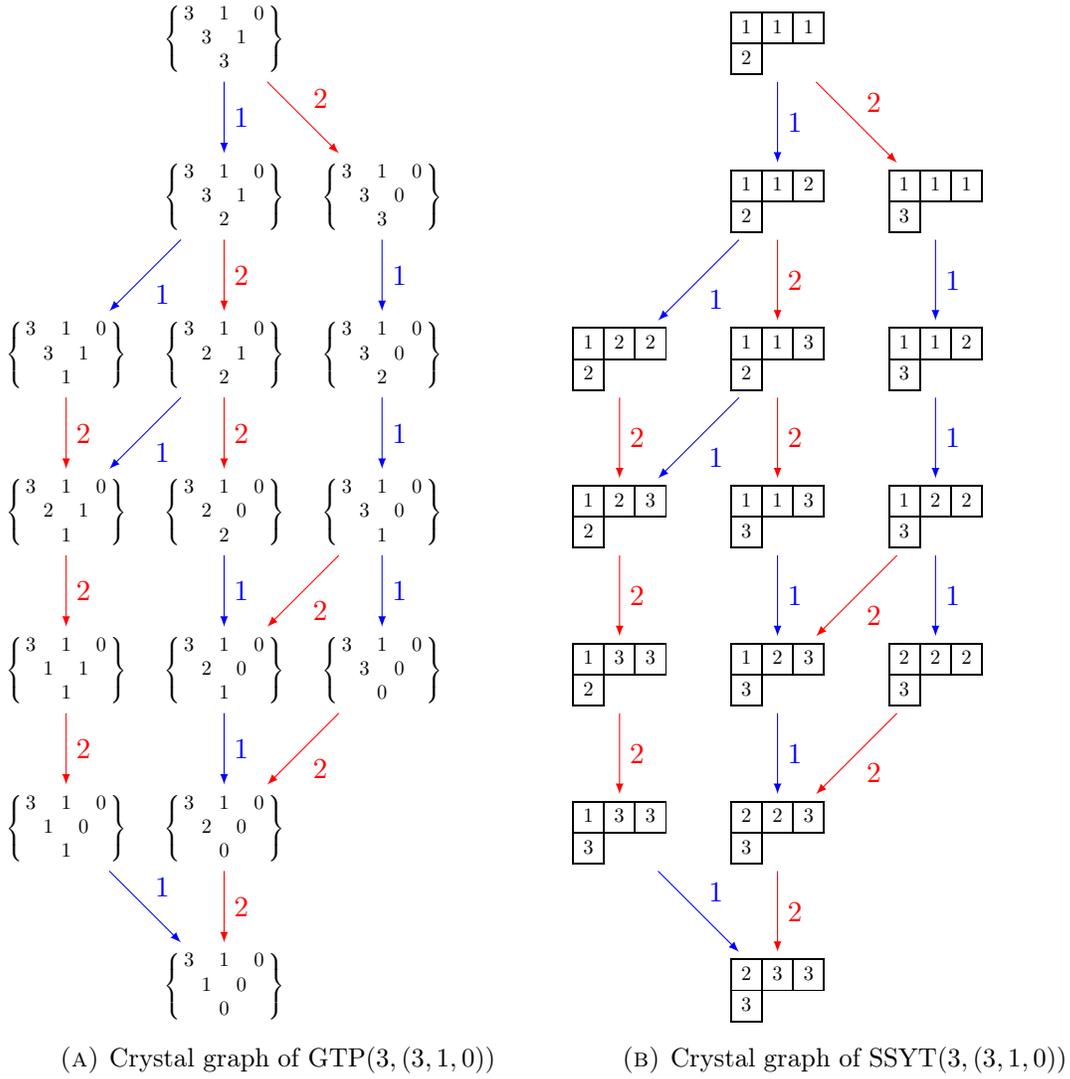
\begin{figure}
\centering
\begin{subfigure}[b]{0.49\textwidth}
\begin{tikzpicture}[>=latex,line join=bevel,scale=.7]
   \node (1) at (0,0) [draw,draw=none,scale=.7] {$\left\{\begin{array}{*{5}c} 3 & & 1 & & 0 \\ & 3 & & 1 & \\ & & 3 & & \end{array}\right\}$};
   \node (2) at (0,-3) [draw,draw=none,scale=.7] {$\left\{\begin{array}{*{5}c} 3 & & 1 & & 0 \\ & 3 & & 1 & \\ & & 2 & & \end{array}\right\}$};
   \node (5) at (0,-6) [draw,draw=none,scale=.7] {$\left\{\begin{array}{*{5}c} 3 & & 1 & & 0 \\ & 2 & & 1 & \\ & & 2 & & \end{array}\right\}$};
   \node (8) at (0,-9) [draw,draw=none,scale=.7] {$\left\{\begin{array}{*{5}c} 3 & & 1 & & 0 \\ & 2 & & 0 & \\ & & 2 & & \end{array}\right\}$};
   \node (11) at (0,-12) [draw,draw=none,scale=.7] {$\left\{\begin{array}{*{5}c} 3 & & 1 & & 0 \\ & 2 & & 0 & \\ & & 1 & & \end{array}\right\}$};
   \node (14) at (0,-15) [draw,draw=none,scale=.7] {$\left\{\begin{array}{*{5}c} 3 & & 1 & & 0 \\ & 2 & & 0 & \\ & & 0 & & \end{array}\right\}$};
   \node (15) at (0,-18) [draw,draw=none,scale=.7] {$\left\{\begin{array}{*{5}c} 3 & & 1 & & 0 \\ & 1 & & 0 & \\ & & 0 & & \end{array}\right\}$};
   \node (4) at (-3,-6) [draw,draw=none,scale=.7] {$\left\{\begin{array}{*{5}c} 3 & & 1 & & 0 \\ & 3 & & 1 & \\ & & 1 & & \end{array}\right\}$};
   \node (7) at (-3,-9) [draw,draw=none,scale=.7] {$\left\{\begin{array}{*{5}c} 3 & & 1 & & 0 \\ & 2 & & 1 & \\ & & 1 & & \end{array}\right\}$};
   \node (10) at (-3,-12) [draw,draw=none,scale=.7] {$\left\{\begin{array}{*{5}c} 3 & & 1 & & 0 \\ & 1 & & 1 & \\ & & 1 & & \end{array}\right\}$};
   \node (13) at (-3,-15) [draw,draw=none,scale=.7] {$\left\{\begin{array}{*{5}c} 3 & & 1 & & 0 \\ & 1 & & 0 & \\ & & 1 & & \end{array}\right\}$};
   \node (3) at (3,-3) [draw,draw=none,scale=.7] {$\left\{\begin{array}{*{5}c} 3 & & 1 & & 0 \\ & 3 & & 0 & \\ & & 3 & & \end{array}\right\}$};
   \node (6) at (3,-6) [draw,draw=none,scale=.7] {$\left\{\begin{array}{*{5}c} 3 & & 1 & & 0 \\ & 3 & & 0 & \\ & & 2 & & \end{array}\right\}$};
   \node (9) at (3,-9) [draw,draw=none,scale=.7] {$\left\{\begin{array}{*{5}c} 3 & & 1 & & 0 \\ & 3 & & 0 & \\ & & 1 & & \end{array}\right\}$};
   \node (12) at (3,-12) [draw,draw=none,scale=.7] {$\left\{\begin{array}{*{5}c} 3 & & 1 & & 0 \\ & 3 & & 0 & \\ & & 0 & & \end{array}\right\}$};
   
   \draw [blue,->] (1) to["$1$"] (2);
   \draw [red,->] (1) to["$2$"] (3);
   \draw [blue,->] (2) to["$1$"] (4);
   \draw [red,->] (2) to["$2$"] (5);
   \draw [blue,->] (3) to["$1$"] (6);
   \draw [red,->] (4) to["$2$"] (7);
   \draw [blue,->] (5) to["$1$"] (7);
   \draw [red,->] (5) to["$2$"] (8);
   \draw [blue,->] (6) to["$1$"] (9);
   \draw [red,->] (7) to["$2$"] (10);
   \draw [blue,->] (8) to["$1$"] (11);
   \draw [red,->] (9) to["$2$"] (11);
   \draw [blue,->] (9) to["$1$"] (12);
   \draw [red,->] (10) to["$2$"] (13);
   \draw [blue,->] (11) to["$1$"] (14);
   \draw [red,->] (12) to["$2$"] (14);
   \draw [blue,->] (13) to["$1$"] (15);
   \draw [red,->] (14) to["$2$"] (15);
\end{tikzpicture}
\caption{Crystal graph of $\GTP(3,(3,1,0))$}
\end{subfigure}
\begin{subfigure}[b]{0.49\textwidth}
\begin{tikzpicture}[>=latex,line join=bevel,scale=.7]
   \node (1) at (0,0) [draw,draw=none,scale=.7] {$\begin{ytableau} 1 & 1 & 1 \\ 2  \end{ytableau}$};
   \node (2) at (0,-3) [draw,draw=none,scale=.7] {$\begin{ytableau} 1 & 1 & 2 \\ 2 \end{ytableau}$};
   \node (5) at (0,-6) [draw,draw=none,scale=.7] {$\begin{ytableau} 1 & 1 & 3 \\ 2 \end{ytableau}$};
   \node (8) at (0,-9) [draw,draw=none,scale=.7] {$\begin{ytableau} 1 & 1 & 3 \\ 3 \end{ytableau}$};
   \node (11) at (0,-12) [draw,draw=none,scale=.7] {$\begin{ytableau} 1 & 2 & 3 \\ 3 \end{ytableau}$};
   \node (14) at (0,-15) [draw,draw=none,scale=.7] {$\begin{ytableau} 2 & 2 & 3 \\ 3 \end{ytableau}$};
   \node (15) at (0,-18) [draw,draw=none,scale=.7] {$\begin{ytableau} 2 & 3 & 3 \\ 3 \end{ytableau}$};
   \node (4) at (-3,-6) [draw,draw=none,scale=.7] {$\begin{ytableau} 1 & 2 & 2 \\ 2 \end{ytableau}$};
   \node (7) at (-3,-9) [draw,draw=none,scale=.7] {$\begin{ytableau} 1 & 2 & 3 \\ 2 \end{ytableau}$};
   \node (10) at (-3,-12) [draw,draw=none,scale=.7] {$\begin{ytableau} 1 & 3 & 3 \\ 2 \end{ytableau}$};
   \node (13) at (-3,-15) [draw,draw=none,scale=.7] {$\begin{ytableau} 1 & 3 & 3 \\ 3 \end{ytableau}$};
   \node (3) at (3,-3) [draw,draw=none,scale=.7] {$\begin{ytableau} 1 & 1 & 1 \\ 3 \end{ytableau}$};
   \node (6) at (3,-6) [draw,draw=none,scale=.7] {$\begin{ytableau} 1 & 1 & 2 \\ 3 \end{ytableau}$};
   \node (9) at (3,-9) [draw,draw=none,scale=.7] {$\begin{ytableau} 1 & 2 & 2 \\ 3 \end{ytableau}$};
   \node (12) at (3,-12) [draw,draw=none,scale=.7] {$\begin{ytableau} 2 & 2 & 2 \\ 3 \end{ytableau}$};
   
   \draw [blue,->] (1) to["$1$"] (2);
   \draw [red,->] (1) to["$2$"] (3);
   \draw [blue,->] (2) to["$1$"] (4);
   \draw [red,->] (2) to["$2$"] (5);
   \draw [blue,->] (3) to["$1$"] (6);
   \draw [red,->] (4) to["$2$"] (7);
   \draw [blue,->] (5) to["$1$"] (7);
   \draw [red,->] (5) to["$2$"] (8);
   \draw [blue,->] (6) to["$1$"] (9);
   \draw [red,->] (7) to["$2$"] (10);
   \draw [blue,->] (8) to["$1$"] (11);
   \draw [red,->] (9) to["$2$"] (11);
   \draw [blue,->] (9) to["$1$"] (12);
   \draw [red,->] (10) to["$2$"] (13);
   \draw [blue,->] (11) to["$1$"] (14);
   \draw [red,->] (12) to["$2$"] (14);
   \draw [blue,->] (13) to["$1$"] (15);
   \draw [red,->] (14) to["$2$"] (15);
\end{tikzpicture}
\caption{Crystal graph of $\SSYT(3,(3,1,0))$}
\end{subfigure}
\caption{Isomorphic crystal graphs of type $A_2$}
\label{fig:crystals}
\end{figure}

Figure \ref{fig:crystals} shows the respective crystal graphs of two isomorphic crystals of type $A_2$. 
To illustrate, consider the Gelfand-Tsetlin pattern
\[
	\La = \left\{\begin{array}{*{5}c} \la^{(3)}_1 & & \la^{(3)}_2 & & \la^{(3)}_3 \\ & \la^{(2)}_1 & & \la^{(2)}_2 & \\ & & \la^{(1)}_1 & & \end{array}\right\}
= \left\{\begin{array}{*{5}c} 3 & & 1 & & 0 \\ & 3 & & 1 & \\ & & 2 & & \end{array}\right\}
\]
in the crystal $\GTP(3,(3,1,0))$ (in Figure \ref{fig:crystals} it is in the second vertex row from the top). Let us compute $\tilde f_1(\La)$. First we need $\varphi_1(\La)$.
The relevant diamond sum (see \ref{eq:diamond-sumA}) for $\La$ is (recall that entries outside the array are zero by convention)
\[
	A_1^{(1)}(\La) = a_1^{(1)}(\La) = \la_1^{(1)} - \la_1^{(0)} + \la_2^{(1)} - \la_2^{(2)} = 2 - 0 + 0 - 1 = 1,
\]
which gives
\[
	\varphi_1(\La) = \max\big\{ A_1^{(1)}(\La)\big\} = 1.
\]
By Definition \ref{def:GT-crystal-data},
the only value for $\ell$ here is $\ell=1$, hence applying $\tf_1$ on $\La$ has the effect of decrementing the entry $\la^{(1)}_1$:
\[
	\tilde{f}_1(\La) = \La-\Delta^{(1)}_1= \left\{\begin{array}{*{5}c} 3 & & 1 & & 0 \\ & 3 & & 1 & \\ & & 1 & & \end{array}\right\}
\]
as is visible in Figure \ref{fig:crystals}.
Let us also compute $\tf_2(\La)$. The diamond sums we need are
\begin{align*}
	A_1^{(2)}(\La) &= a_1^{(2)}(\La) + a_2^{(2)}(\La) = (3 - 2 + 1 - 1) + (1 - 0 + 0 - 0) = 1 + 1 = 2 \\
	A_2^{(2)}(\La) &= a_2^{(2)}(\La) = 1 - 0 + 0 - 0 = 1,
\end{align*}
which gives
\[
	\varphi_2(\La) = \max\big\{ A_1^{(2)}(\La), A_2^{(2)}(\La) \big\} = \max \big\{ 2, 1 \big\} = 2.
\]
Here, the largest index $\ell\in\{1,2\}$ for which $A^{(2)}_\ell=\varphi_2(\La)$ is $\ell=1$. Therefore,
applying $\tf_2$ on $\La$ has the effect of decrementing the entry $\la^{(2)}_1$:
\[
\tilde{f}_2(\La) = \La-\Delta^{(2)}_1= \left\{\begin{array}{*{5}c} 3 & & 1 & & 0 \\ & 2 & & 1 & \\ & & 2 & & \end{array}\right\}
\]
as can be seen in Figure \ref{fig:crystals}. The remaining crystal structure can be worked out in a similar fashion.

\end{document}